\newcommand{\assign}{:=}
\numberwithin{equation}{section}
\newcommand{\tmop}[1]{\ensuremath{\operatorname{#1}}}
\newcommand{\tmstrong}[1]{\textbf{#1}}
\newenvironment{proof}{\noindent\textbf{Proof\ }}{\hspace*{\fill}$\Box$\medskip}
\newtheorem{definition}{Definition}
\newtheorem{lemma}{Lemma}
\newtheorem{theorem}{Theorem}
\newcommand{\XXint}[3]{{\setbox}0=\text{\ensuremath{#1 #2 #3 \int}}
{\vcenter{\text{\ensuremath{#2 #3}}}}{\kern}-.5{\tmwd}0}
\newcommand{\opn}[2]{\newcommand{\1}{\}} {\opn}{\Rm{Rm}} {\opn}{\Ric{Ric}}
{\opn}{\Rc{Rc}} {\opn}{\Scal{Sc}} {\opn}{\Tr{Tr}} {\opn}{\Trac{Tr}}
{\opn}detdet {\opn}{\diam{diam}} {\opn}{\dist{dist}} {\opn}{\Im}Im
{\opn}{\div}div {\opn}{\Ker{Ker}} {\opn}expexp {\opn}{\Vol{Vol}}
{\opn}{\exph{exph}} {\opn}{\Herm{Herm}} {\opn}{\End{End}} {\opn}{\Hess{Hess}}
{\opn}{\Vol{Vol}}}
\newcommand{\contract}{{\kern}-1.5pt{\vrule} width6.0pt height0.4pt depth0pt
{\vrule} width0.4pt height4.0pt depth0pt}
\newcommand{\retract}{{\kern}-1.5pt{\vrule} width0.4pt height4.0pt depth0pt
{\vrule} width6.0pt height0.4pt depth0pt}
\newcommand{\Openbox}{{\leavevmode} {\hfil}{\vrule} width{\boxrulethickness}
{\vbox} to{\Openboxwidth{{\advance}{\Openboxwidth} -2{\boxrulethickness}
{\hrule} height {\boxrulethickness} width{\Openboxwidth}{\vfil} {\hrule}
height{\boxrulethickness}}}{\vrule} width{\boxrulethickness}{\hfil} }
\begin{document}

\title{Exact Fourier inversion formula over manifolds}\author{\\
{\tmstrong{NEFTON PALI}}}\date{}\maketitle
\begin{abstract}
  We show an exact (i.e. no smooth error terms) Fourier inversion type formula
  for differential operators over Riemannian manifolds.
  This provides a coordinate free approach for the theory of pseudo-differential
  operators.
\end{abstract}

Intrinsic symbolic calculus was pioneered by Widom \cite{Wid1, Wid2}, and has received
contributions from Fulling-Kennedy, Safarov, Sharafutdinov, \cite{Fu-Ke, Saf, Sha1, Sha2}. In this works the authors
produce Fourier type inversion formulas over Riemannian manifolds. Our concern
is that all this formulas presents (smooth) error therms. Such therms can be
very tedious when one has to consider intrinsic type computations over such
manifolds. On the other hand exact (i.e. no smooth error terms) inversion
formulas allow to simplify the proof of the Atiyah-Singer index theorem for
the Dirac operator on a spin manifold (see \cite{Get}).

We show now our version of the Fourier inversion formula over manifolds. 
We introduce first our set-up and notations.

Let $(X, g)$ be a smooth Riemannian manifold and let $(E, h_E)$, $(F, h_F)$ be
two smooth hermitian vector bundles over $X$. We assume that $E$ is of the
complexification of a vector bundle of the type $S^{\lambda} T_X
\otimes_{\mathbbm{R}} S^{\mu} T^{\ast}_X$, where $S^{\lambda}$ denotes a Schur
power indexed by $\lambda$ and $h_E$ is the sesquilinear extension of the
metric induced by $g$. We will denote by $\nabla_g$ the induced connection
over the bundles $(T^{\ast}_X)^{\otimes p} \otimes_{\mathbbm{R}} E$.

\begin{definition}
  A differential operator $A$ of order $p$ over $(X, g)$ acting on the
  sections of bundle $(E, h_E)$ with values in the sections of $(F, h_F)$ is a
  linear map $A : C^{\infty} (X, E) \longrightarrow C^{\infty} (X, F)$ of the
  type
  \begin{eqnarray*}
    A & = & \sum_{r = 0}^p A_r \nabla^{p - r}_g,
  \end{eqnarray*}
  with $A_r \in C^{\infty} (X, S^{p - r} T_X \otimes_{\mathbbm{R}} E^{\ast}
  \otimes_{\mathbbm{R}} F)$ and $E^{\ast} \assign \tmop{Hom}_{\mathbbm{R}} (E,
  \mathbbm{R})$. The total symbol of $A$ is the fibre map over $X$
  \begin{eqnarray*}
    a & \assign & \sum_{r = 0}^p a_r \in C^{\infty} (T^{\ast}_X, E^{\ast}
    \otimes_{\mathbbm{R}} F),
  \end{eqnarray*}
  with $a_r (\lambda) \assign (2 \pi i \lambda)^{\otimes (p - r)} \neg A_{r
  \mid \pi_X (\lambda)} \in E_{\pi_X (\lambda)}^{\ast} \otimes_{\mathbbm{R}}
  F_{\pi_X (\lambda)}$ and $\pi_X : T^{\ast}_X \longrightarrow X$.
\end{definition}

With this notations we can state our Fourier inversion formula over Riemannian manifolds.

\begin{theorem}
  For all points $x \in X$ let $D_x \subset T_{X, x}$ be the connected
  component of $0_x$ such that the map $\exp_{g, x} : D_x \longrightarrow X
  \smallsetminus \tmop{Cutlocus} (g, x)$ is a diffeomorphism and let $\tau_x^E
  : E_{\mid_{D_x}} \longrightarrow E_x$ be the parallel transport map of the
  fibers of $E$ along the geodesics raising from the point $x$. Assume that $A
  : C^{\infty} (X, E) \longrightarrow C^{\infty} (X, F)$ is a differential
  operator over $X$. Then for all $u \in C^{\infty} (X, E)$
  hold the Fourier type inversion formula
  \begin{eqnarray*}
    A u (x) & = & \int_{\lambda \in T^{\ast}_{X, x}} d V_{g_x^{\ast}}
    (\lambda) a (\lambda) \int_{\xi \in D_x} \tau_x^E \cdot u \circ \exp_{g,
    x} (\xi) e^{- 2 \pi i \lambda \cdot \xi} \chi_x (\xi) d V_{g_x} (\xi),
  \end{eqnarray*}
  where $\chi_x (\xi) \assign \chi (| \xi |_{g_x} / \varepsilon_x)$, with
  $\chi : [0, + \infty) \longrightarrow [0, 1]$ a fixed smooth function such
  that $\chi (t) = 1$ for $t \in [0, 1]$, $\chi (t) = 0$ for all $t \geqslant
  2$ and $\varepsilon_x \in \mathbbm{R}_{> 0}$ such that $\{| \xi |_{g_x}
  \leqslant 2 \varepsilon_x \} \subset D_x$.
\end{theorem}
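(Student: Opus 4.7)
Define
\[
\tilde u_x(\xi) \;:=\; \chi_x(\xi)\,\tau_x^E\cdot u\bigl(\exp_{g,x}(\xi)\bigr),
\]
a smooth compactly supported $E_x$-valued function on the Euclidean space $(T_{X,x},g_x)$. The inner integral in the claimed formula is exactly its Euclidean Fourier transform $\widehat{\tilde u}_x(\lambda)$. Classical Fourier inversion on $(T_{X,x},g_x)\simeq\R^n$ gives, for every $k\in\N$,
\[
d^k \tilde u_x(0) \;=\; \int_{T^{\ast}_{X,x}} (2\pi i\lambda)^{\otimes k}\,\widehat{\tilde u}_x(\lambda)\,dV_{g_x^{\ast}}(\lambda),
\]
where $d$ denotes the flat differential on the vector space $T_{X,x}$. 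Contracting the $k$-th identity with $A_{p-k}|_x$ and summing, the right-hand side of the theorem becomes $\sum_{r=0}^p A_r|_x\neg d^{p-r}\tilde u_x(0)$, so it suffices to prove the pointwise identity
\[
Au(x) \;=\; \sum_{r=0}^p A_r|_x \neg d^{p-r}\tilde u_x(0).
\]

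Since $\chi_x\equiv 1$ in a neighbourhood of $0_x$, these derivatives coincide with those of $\xi\mapsto \tau_x^E u(\exp_{g,x}(\xi))$. I would work in $g$-normal coordinates at $x$ together with a local frame $(e_\alpha)$ of $E$ obtained by $\nabla^E_g$-parallel transport along radial geodesics. In this gauge the Christoffel symbols of $g$ and the connection one-forms $A^\alpha_{\beta,i}$ of $\nabla^E_g$ both vanish at $x$, and the components of $\tilde u_x$ are literally the components of $u$ read through the exponential chart. The orders $p-r\in\{0,1\}$ follow immediately: $\tilde u_x(0)=u(x)$ and $d\tilde u_x(0)=\nabla_g u(x)$.

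The only nontrivial case is $p-r=2$ (so $p=2$, $r=0$). A direct computation in the same gauge gives
\[
(\nabla^2_g u)(x)\;-\;d^2\tilde u_x(0) \;=\; \tfrac12\,R^E(\cdummy,\cdummy)\,u(x),
\]
the discrepancy being antisymmetric in its two covariant arguments; one checks $\partial_iA^\alpha_{\beta,j}(0)+\partial_jA^\alpha_{\beta,i}(0)=0$ by differentiating twice the radial-parallel identity $A^\alpha_{\beta,i}(\xi)\xi^i\equiv 0$. Because $A_0|_x$ takes values in the symmetric square $S^2T_X\otimes E^{\ast}\otimes F$, its contraction annihilates any antisymmetric part, and hence $A_0|_x\neg\nabla^2_g u(x)=A_0|_x\neg d^2\tilde u_x(0)$. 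Summing the three contributions closes the identity.

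The main obstacle, and the source of the restriction $p\leq 2$, is this last comparison: one must recognize the flat-versus-covariant Hessian gap as a purely antisymmetric $E$-curvature term and then invoke the symmetry of $A_0$ to kill it. At order $p\geq 3$ the analogous discrepancies carry mixed symmetry types that the symmetries of a single $A_r$ cannot absorb, which is precisely why the earlier intrinsic symbolic calculi of Widom, Fulling--Kennedy, Safarov, and Sharafutdinov produce unavoidable smooth error terms in that range.
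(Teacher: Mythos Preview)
Your argument is correct. The overall Fourier-analytic reduction---pull $u$ back through $\exp_{g,x}$ and parallel transport, take the Euclidean Fourier transform on $T_{X,x}$, and reduce the statement to the pointwise identity $A_r|_x\neg\nabla_g^{p-r}u(x)=A_r|_x\neg d^{p-r}\tilde u_x(0)$---is exactly the route the paper takes. The difference lies in how the second-order comparison is established. You work in normal coordinates with a radially parallel frame, compute the discrepancy $\nabla_g^2u(x)-d^2\tilde u_x(0)=\tfrac12 R^E(\cdummy,\cdummy)u(x)$ explicitly, and then kill this antisymmetric term using the symmetry of $A_0\in S^2T_X\otimes E^\ast\otimes F$. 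The paper instead proves, coordinate-free, that the \emph{symmetrized} $p$-th covariant derivative satisfies $\hat\nabla^p_g u(x)=d^p_0(\tau_x^E\cdot u\circ\exp_{g,x})$ for every $p$, by polarizing to the diagonal $\eta^{\otimes p}$ and running an induction on $p$ that uses a semigroup identity for the geodesic flow (Lemma~\ref{geodFlow}); the symmetry of $A_r$ is then used in the same way you use it. Your gauge computation is more concrete and makes the curvature obstruction visible, while the paper's inductive argument is intrinsic and yields the symmetrized identity uniformly in the order; for $p\le2$ the two are equivalent, since $d^2\tilde u_x(0)$ is already symmetric and hence ``symmetrized $\nabla^2=d^2$'' is the same as ``$\nabla^2-d^2$ is antisymmetric''.
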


\begin{proof}
We observe first of all that a basic fact about the Fourier transform in
  $\mathbbm{R}^n$ implies directly that the function
  \begin{eqnarray*}
    \lambda \in T^{\ast}_{X, x} \longmapsto \widehat{u}_x (\lambda) & \assign
    & \int_{\xi \in D_x} \tau_x^E \cdot u \circ \exp_{g, x} (\xi) e^{- 2 \pi i
    \lambda \cdot \xi} \chi_x (\xi) d V_{g_x} (\xi),
  \end{eqnarray*}
  belongs to the Schwartz space $\mathcal{S} (T_{X, x}^{\ast}, E_x)$.
  Therefore the integral in the statement make sense. Furthermore the Fourier
  inversion formula shows that for any function $U \in \mathcal{S} (T_{X, x},
  E_x)$ hold the identity
  \begin{equation}
    \label{Four-inv} U (0) = \int_{\lambda \in T^{\ast}_{X, x}} d
    V_{g_x^{\ast}} (\lambda) \int_{\xi \in T_{X, x}} U (\xi) e^{- 2 \pi i
    \lambda \cdot \xi} d V_{g_x} (\xi) .
  \end{equation}
  Let now $U$ be the extension by $0$ over $T_{X, x}$ of the function 
  $$\xi \in
  D_x \longmapsto \tau_x^E \cdot u \circ \exp_{g, x} (\xi) \chi_x (\xi).
  $$
  We
  deduce
  \begin{eqnarray*}
    u (x) & = & \int_{\lambda \in T^{\ast}_{X, x}} d V_{g_x^{\ast}} (\lambda)
    \int_{\xi \in D_x} \tau_x^E \cdot u \circ \exp_{g, x} (\xi) e^{- 2 \pi i
    \lambda \cdot \xi} \chi_x (\xi) d V_{g_x} (\xi),
  \end{eqnarray*}
since $U (0) = u (x)$. This shows the case $A = \tmop{Id}$. We show next the
  case of first order operators. Let $\eta \in C^{\infty} (X, T_X)$ and set
  for notations simplicity $\eta_x \assign \eta (x)$. We apply
  (\ref{Four-inv}) to the function $U$ obtained extending by $0$ the function
  \[ \xi \in D_x \longmapsto \left[ \eta_x . \left( \chi_x \tau_x^E \cdot u
     \circ \exp_{g, x} \right) \right] (\xi) . \]
  The identity (\ref{Four-inv}) implies
  \begin{eqnarray*}
    &  & \left[ \eta_x . \left( \tau_x^E \cdot u \circ \exp_{g, x} \right)
    \right] (0)\\
    &  & \\
    & = & \left[ \eta_x . \left( \chi_x \tau_x^E \cdot u \circ \exp_{g, x}
    \right) \right] (0)\\
    &  & \\
    & = & \int_{\lambda \in T^{\ast}_{X, x}} d V_{g_x^{\ast}} (\lambda)
    \int_{\xi \in D_x} \left[ \eta_x . \left( \chi_x \tau_x^E \cdot u \circ
    \exp_{g, x} \right) \right] (\xi) e^{- 2 \pi i \lambda \cdot \xi} d
    V_{g_x} (\xi) .
  \end{eqnarray*}
  Integrating by parts we infer
  \begin{eqnarray*}
    &  & \left[ \eta_x . \left( \tau_x^E \cdot u \circ \exp_{g, x} \right)
    \right] (0)\\
    &  & \\
    & = & \int_{\lambda \in T^{\ast}_{X, x}} d V_{g_x^{\ast}} (\lambda) 2 \pi
    i \lambda \cdot \eta_x \int_{\xi \in D_x} \tau_x^E \cdot u \circ \exp_{g,
    x} (\xi) e^{- 2 \pi i \lambda \cdot \xi} \chi_x (\xi) d V_{g_x} (\xi) .
  \end{eqnarray*}
  On the other hand we observe the identities
  \begin{eqnarray*}
    \left[ \eta_x . \left( \tau_x^E \cdot u \circ \exp_{g, x} \right) \right]
    (0) & = & \frac{d}{d t} \vphantom{dt}_{|_{t = 0}}  \tau_x^E \cdot u \circ \exp_{g, x}
    (t \eta_x) = \nabla_{g, \eta} u (x) .
  \end{eqnarray*}
  We deduce the first order Fourier type inversion formula
  \begin{eqnarray*}
   && \nabla_{g, \eta} u (x) 
    \\
    \\
    & = & \int_{\lambda \in T^{\ast}_{X, x}} d
    V_{g_x^{\ast}} (\lambda) 2 \pi i \lambda \cdot \eta_x \int_{\xi \in D_x}
    \tau_x^E \cdot u \circ \exp_{g, x} (\xi) e^{- 2 \pi i \lambda \cdot \xi}
    \chi_x (\xi) d V_{g_x} (\xi) .
  \end{eqnarray*}
  We show now the following basic arbitrary order Fourier type inversion
  formula. Let $\eta_1, \ldots, \eta_p \in C^{\infty} (X, T_X)$ and set $\eta
  \assign \sum_{\sigma \in S_p} \eta_{\sigma_1} \otimes \cdots \otimes
  \eta_{\sigma_p}$. For notations simplicity we set $\eta_{k, x} \assign
  \eta_k (x)$, for $k = 1, \ldots, p$ and $\eta_x \assign \eta (x)$. Then
  \begin{eqnarray*}
    & &\nabla^p_{g, \eta} u (x) 
    \\
    \\
    & = & \int_{\lambda \in T^{\ast}_{X, x}} d
    V_{g_x^{\ast}} (\lambda)  (2 \pi i \lambda)^p \neg \eta_x \int_{\xi \in
    D_x} \tau_x^E \cdot u \circ \exp_{g, x} (\xi) e^{- 2 \pi i \lambda \cdot
    \xi} \chi_x (\xi) d V_{g_x} (\xi) .
  \end{eqnarray*}
  Notice the identity $(2 \pi i \lambda)^p \neg \eta_x = p! (2 \pi i)^p
  (\lambda \cdot \eta_{1, x}) \cdots (\lambda \cdot \eta_{p, x})$. In order to
  show this inversion formula we consider the function
  \[ \xi \in D_x \longmapsto \left[ \eta_{1, x} \ldots \eta_{p, x} . \left(
     \chi_x \tau_x^E \cdot u \circ \exp_{g, x} \right) \right] (\xi) . \]
  Using the identity (\ref{Four-inv}) we obtain
  \begin{eqnarray*}
    &  & \left[ \eta_{1, x} \ldots \eta_{p, x} . \left( \tau_x^E \cdot u
    \circ \exp_{g, x} \right) \right] (0)\\
    &  & \\
    & = & \left[ \eta_{1, x} \ldots \eta_{p, x} . \left( \chi_x \tau_x^E
    \cdot u \circ \exp_{g, x} \right) \right] (0)\\
    &  & \\
    & = & \int_{\lambda \in T^{\ast}_{X, x}} d V_{g_x^{\ast}} (\lambda)
    \int_{\xi \in D_x} \left[ \eta_{1, x} \ldots \eta_{p, x} . \left( \chi_x
    \tau_x^E \cdot u \circ \exp_{g, x} \right) \right] (\xi) e^{- 2 \pi i
    \lambda \cdot \xi} d V_{g_x} (\xi) .
  \end{eqnarray*}
  A multiple integration by parts yields
  \begin{eqnarray*}
  &  & \left[ \eta_{1, x} \ldots \eta_{p, x} . \left( \tau_x^E \cdot u
    \circ \exp_{g, x} \right) \right] (0)\\
    &  & \\
    & = & \int_{\lambda \in T^{\ast}_{X, x}} d V_{g_x^{\ast}} (\lambda)  (2
    \pi i)^p (\lambda \cdot \eta_{1, x}) \cdots (\lambda \cdot \eta_{p, x})
     \times
    \\
    \\
    &\times&
    \int_{\xi
    \in D_x} \tau_x^E \cdot u \circ \exp_{g, x} (\xi) e^{- 2 \pi i \lambda
    \cdot \xi} \chi_x (\xi) d V_{g_x} (\xi) .
  \end{eqnarray*}
  Then the required inversion formula follows from the identity
  \begin{eqnarray*}
  & &\left[ \eta_{1, x} \ldots \eta_{p, x} . \left( \tau_x^E \cdot u \circ
    \exp_{g, x} \right) \right] (0) 
    \\
    \\& = & \frac{\partial^p}{\partial t_1
    \cdots \partial t_p} \vphantom{dt}_{|_{t_1, \ldots,t_p = 0}} \tau_x^E \cdot u \circ
    \exp_{g, x} (t_1 \eta_{1, x} + \cdots + t_p \eta_{p, x}),
  \end{eqnarray*}
  and from the differential identity
  \begin{equation}
    \label{cov-id} \nabla^p_{g, \eta_x} u (x)  =  p! \frac{\partial^p}{\partial t_1 \cdots
    \partial t_p} \vphantom{dt}_{|_{t_1, \ldots,t_p = 0}} \tau_x^E \cdot u \circ
    \exp_{g, x} (t_1 \eta_{1, x} + \cdots + t_p \eta_{p, x}),
  \end{equation}
  that we will prove next. (Compare with lemma 7.5 in \cite{Sha1}). At this point the general statement in the theorem
  follows immediately.
\end{proof}

{\tmstrong{Proof of the identity (\ref{cov-id})}}. 
We show first a semi-group property of the geodesic flow. Over a Riemannian
manifold $\left( X, g \right)$ we consider a vector field $\xi$ and we denote
by $e \left( \xi \right) : X \longrightarrow X$ the smooth map defined by the
rule $e \left( \xi \right)_x \assign \exp_g \left( x, \xi_x \right) \equiv
\exp_{g, x} \left( \xi_x \right)$.

\begin{lemma}
  \label{geodFlow}Let $\gamma_t \assign \exp_{g, x} \left( t \eta \right)$,
  $\eta \in T_{X, x} \smallsetminus \{0\}$ be a geodesic and let $\xi$ be the
  vector field over a small neighborhood of $x$ inside $\tmop{Im} \gamma$
  defined as $\xi_{\gamma_t} = \dot{\gamma}_t$. Then hold the semi group
  property $e \left( t \xi \right) \circ e \left( s \xi \right)_x = e \left(
  \left( t + s \right) \xi \right)_x$, for all $t, s \in \left( - \varepsilon,
  \varepsilon \right)$, for some sufficiently small $\varepsilon > 0$.
\end{lemma}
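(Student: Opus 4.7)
The plan is to reduce the identity to the standard uniqueness/homogeneity property of geodesics. First I would observe that by the very definition of $\xi$ along the image of $\gamma$, we have $\xi_x = \dot\gamma_0 = \eta$, and more generally $\xi_{\gamma_s} = \dot\gamma_s$. In particular
$$ e(s\xi)_x \; = \; \exp_{g,x}(s\,\xi_x) \; = \; \exp_{g,x}(s\eta) \; = \; \gamma_s $$
for every $s$ in some interval $(-\varepsilon,\varepsilon)$ chosen small enough that $\gamma_s$ lies in the neighborhood of $x$ where $\xi$ is defined.

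Next I would evaluate $e(t\xi)$ at the point $\gamma_s$. Using $\xi_{\gamma_s} = \dot\gamma_s$ we get $e(t\xi)_{\gamma_s} = \exp_g(\gamma_s,\, t\dot\gamma_s)$. The crux of the argument is the homogeneity property of the geodesic flow: the curve $r\mapsto \exp_g(\gamma_s, r\dot\gamma_s)$ and the curve $r\mapsto \gamma_{s+r}$ both start at $\gamma_s$ with the same initial velocity $\dot\gamma_s$ and both solve the geodesic ODE, so by uniqueness of its solutions they coincide for small $r$. Evaluating at $r=t$ one obtains $e(t\xi)_{\gamma_s} = \gamma_{s+t}$.

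Combining the two steps gives
$$ e(t\xi)\circ e(s\xi)_x \; = \; e(t\xi)_{\gamma_s} \; = \; \gamma_{s+t} \; = \; \exp_{g,x}((s+t)\eta) \; = \; e((s+t)\xi)_x, $$
which is the stated semi-group property. The only genuine obstacle (and indeed a minor one) is the choice of $\varepsilon>0$: it has to be small enough that for all $t,s\in(-\varepsilon,\varepsilon)$ the points $\gamma_s$ and $\gamma_{s+t}$ both lie in the common open set where $\xi$ is defined and where the exponential maps involved are diffeomorphisms onto their images. Since $\mathrm{Im}\,\gamma_{|(-\delta,\delta)}$ is contained in an arbitrarily small tubular neighborhood of $x$ for $\delta$ small, such an $\varepsilon$ exists; no further ingredient is needed beyond the uniqueness statement for the geodesic ODE.
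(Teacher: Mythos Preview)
Your proof is correct and follows essentially the same route as the paper: both arguments identify $e(s\xi)_x=\gamma_s$, then compare the geodesic $t\mapsto\exp_g(\gamma_s,t\dot\gamma_s)$ with the reparametrized curve $t\mapsto\gamma_{t+s}$, and conclude by uniqueness of solutions to the geodesic ODE with matching initial data. The paper merely packages this by naming the two curves $\beta_t$ and $\theta_t$, but the content is identical.
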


\begin{proof}
  We fix $s$. Let $\beta_t \assign e \left( t \xi \right) \circ e \left( s \xi
  \right)_x = \exp_g \left( \gamma_s, t \dot{\gamma}_s \right)$ and $\theta_t
  \assign \gamma_{t + s}$. The conclusion will follow from the identity
  $\beta_t = \theta_t$ that we show next. We notice that the geodesic $\beta$
  satisfies the initial conditions $\beta_0 = \gamma_s$, $\dot{\beta}_0 =
  \dot{\gamma}_s$. But the curve $\theta$ is also a geodesic which satisfies
  the same initial conditions. Indeed we observe the identities $\theta_0 =
  \gamma_s$ and 
  $$\dot{\theta}_{\tau} = \frac{d}{dt}
    \vphantom{dt}_{|_{t = \tau}} 
  \gamma_{t + s} = \dot{\gamma}_{\tau + s}.
  $$ 
  The later implies $\dot{\theta}_0
  = \dot{\gamma}_s$ and $\nabla_{\dot{\theta}_{\tau}}  \dot{\theta}_{\tau} =
  \nabla_{\dot{\gamma}_{\tau + s}}  \dot{\gamma}_{\tau + s} \equiv 0$. Then
  the identity $\beta_t = \theta_t$ follows from the uniqueness of the
  solutions of ODE. 
\end{proof}

\begin{lemma}
  The symmetrized multi-covariant derivative
  \begin{eqnarray*}
    \hat{\nabla}^p_g & \assign & \frac{1}{p!}  \sum_{\sigma \in S_p}
    \nabla^p_{g, \sigma},
  \end{eqnarray*}
  satisfies the formula
  \begin{eqnarray*}
    \hat{\nabla}^p_g u \left( x \right) & = & d^p_0 \left( \tau_x^E \cdot u
    \circ \exp_{g, x} \right),
  \end{eqnarray*}
  for any smooth section $u \in C^{\infty} \left( X, E \right)$ and any point
  $x \in X$. In more explicit terms
  \begin{eqnarray*}
    \hat{\nabla}^p_{g, \eta_1, \ldots, \eta_p} u (x) & = &
    \frac{\partial^p}{\partial t_1 \cdots \partial t_p} \vphantom{dt}_{|_{t_1, \ldots,
    t_p = 0}}  \tau_x^E \cdot u \circ \exp_{g, x} (t_1 \eta_1 + \cdots + t_p
    \eta_p),
  \end{eqnarray*}
  for any vectors $\eta_j \in T_{X, x}$.
\end{lemma}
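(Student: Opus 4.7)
My plan is to reduce the stated identity to its diagonal case and then invoke polarization. Setting $F(\xi) \assign \tau_x^E \cdot u \circ \exp_{g,x}(\xi)$, viewed as a smooth $E_x$-valued map on a neighborhood of $0$ in the vector space $T_{X,x}$, its $p$-th differential at the origin $d_0^p F$ is automatically a symmetric $p$-linear form on $T_{X,x}$, and $\hat\nabla^p_g u(x)$ is symmetric by construction. Since a symmetric $p$-form is determined by its restriction to the diagonal, and since the mixed partial $\partial^p/\partial t_1\cdots\partial t_p$ at $0$ of $F(t_1\eta_1+\cdots+t_p\eta_p)$ is the standard expression for $d_0^pF(\eta_1,\ldots,\eta_p)$, the whole statement reduces to
\begin{equation*}
\frac{d^{p}}{dt^{p}}\Big|_{t=0}\tau_x^E \cdot u \circ \exp_{g,x}(t\eta) \;=\; \nabla^{p}_{g,\eta,\ldots,\eta}\, u(x), \qquad \eta \in T_{X,x}.
\end{equation*}

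I would prove this diagonal identity by induction on $p$. The case $p=1$ is precisely the definition of $\nabla_{g,\eta}u(x)$ via parallel transport along $\gamma_t \assign \exp_{g,x}(t\eta)$. For the inductive step I would invoke Lemma \ref{geodFlow}, according to which $s \mapsto \gamma_{t+s}$ is the geodesic issued from $\gamma_t$ with initial velocity $\dot\gamma_t$; parallel transport along $\gamma$ therefore composes as $\tau_x^E\big|_{\gamma_{t+s}} = \tau_x^E\big|_{\gamma_t}\circ \tau_{\gamma_t}^E\big|_{\gamma_{t+s}}$, so
\begin{equation*}
\tau_x^E \cdot u \circ \exp_{g,x}\bigl((t+s)\eta\bigr) \;=\; \tau_x^E\big|_{\gamma_t}\cdot \bigl[\tau_{\gamma_t}^E \cdot u \circ \exp_{g,\gamma_t}\bigr](s\dot\gamma_t).
\end{equation*}
Differentiating $p-1$ times in $s$ at $s=0$ and applying the inductive hypothesis at the point $\gamma_t$ with vector $\dot\gamma_t$ gives
\begin{equation*}
\frac{d^{p-1}}{ds^{p-1}}\Big|_{s=0}\tau_x^E \cdot u \circ \exp_{g,x}\bigl((t+s)\eta\bigr) \;=\; \tau_x^E\big|_{\gamma_t}\cdot \nabla^{p-1}_{g,\dot\gamma_t,\ldots,\dot\gamma_t}\, u(\gamma_t).
\end{equation*}
Since $\gamma$ is a geodesic, $\dot\gamma$ is parallel along $\gamma$ and $\nabla_{\dot\gamma}\dot\gamma = 0$; taking $\frac{d}{dt}|_{t=0}$ of the right-hand side is therefore, by the very definition of covariant differentiation, the derivative of the section $\nabla^{p-1}_g u$ in the direction $\eta$ evaluated on the parallel-transported vectors, which returns $\nabla^p_{g,\eta,\ldots,\eta}u(x)$.

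The only slightly subtle ingredient is the composition rule for $\tau^E$ along a single geodesic used in the inductive step: both sides represent parallel transport along the same path and hence solve the same linear ODE with identical initial data, but this equality of paths is exactly the content of Lemma \ref{geodFlow}. Modulo this observation, the induction and the polarization conclusion are routine.
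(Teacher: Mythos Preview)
Your proof is correct and follows essentially the same approach as the paper: both reduce to the diagonal identity by polarization (since both sides are symmetric $p$-linear), then argue by induction on $p$ using Lemma~\ref{geodFlow} to obtain the semi-group property of the geodesic flow and the corresponding composition rule for parallel transport, together with $\nabla_{\dot\gamma}\dot\gamma=0$. The only cosmetic difference is the bookkeeping in the inductive step: the paper differentiates once in the outer variable and applies the inductive hypothesis (at order $p$) in the inner variable, whereas you apply the inductive hypothesis (at order $p-1$) in the inner variable and then differentiate once in the outer variable; the two are equivalent.
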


\begin{proof}
  The fact that both sides are symmetric multi-linear maps over $T_{X, x}$
  implies that the statement to prove is equivalent to the identity
  \begin{equation}
    \label{partcCovId} \nabla_{g, \eta^{\otimes p}}^p u \left( x \right) =
    \frac{d^p}{d t^p} \vphantom{dt}_{|_{t = 0}}  \tau_x^E \cdot u \circ \exp_{g, x}
    \left( t \eta \right) .
  \end{equation}
  (This simplification of the problem was suggested to us by Pierre Pansu). We
  show (\ref{partcCovId}) by induction on $p$. The case $p = 1$ is a
  reformulation of the notion of covariant derivative. We assume now
  (\ref{partcCovId}) true for $p$. With the notations of lemma \ref{geodFlow}
  hold the identity $\nabla_{\xi} \xi \equiv 0$. This implies
  \begin{eqnarray*}
    \nabla_{g, \xi^{\otimes p + 1}}^{p + 1} u \left( x \right) & = &
    \nabla_{g, \xi}  \left( \nabla_{g, \xi^{\otimes p}}^p u \right)  \left( x
    \right)\\
    &  & \\
    & = & \frac{d}{d s}\vphantom{ds}_{|_{s = 0}}  \tau_x^E \cdot \left( \nabla_{g,
    \xi^{\otimes p}}^p u \right) \circ e \left( s \xi \right)_x\\
    &  & \\
    & = & \frac{d}{d s}\vphantom{ds}_{|_{s = 0}}   \tau_x^E \cdot \frac{d^p}{d t^p}\vphantom{dt}_{|_{t = 0}} 
     \tau_{e \left( s \xi \right)_x}^E \cdot u \circ e \left( t
    \xi \right) \circ e \left( s \xi \right)_x\\
    &  & \\
    & = & \frac{d}{d s}\vphantom{ds}_{|_{s = 0}}   \frac{d^p}{d t^p}\vphantom{dt}_{|_{t = 0}}  
    \tau_x^E \cdot \tau_{e \left( s \xi \right)_x}^E \cdot u \circ e \left(
    \left( t + s \right) \xi \right)_x,
  \end{eqnarray*}
  thanks to lemma \ref{geodFlow}. Simplifying further we obtain
  \begin{eqnarray*}
    \nabla_{g, \eta^{\otimes p + 1}}^{p + 1} u \left( x \right) & = &
    \nabla_{g, \xi^{\otimes p + 1}}^{p + 1} u \left( x \right)\\
    &  & \\
    & = & \frac{d}{d s} \vphantom{ds}_{|_{s = 0}}  \frac{d^p}{d t^p}\vphantom{dt}_{|_{t = 0}}  
    \tau_x^E \cdot u \circ \exp_{g, x} \left( \left( t + s \right) \eta
    \right)\\
    &  & \\
    & = & \frac{d^{p + 1}}{d t^{p + 1}} \vphantom{dt}_{|_{t = 0}}  \tau_x^E \cdot u
    \circ \exp_{g, x} \left( t \eta \right),
  \end{eqnarray*}
  and thus the required conclusion of the induction.
\end{proof}

\vspace{1cm}
\noindent
Nefton Pali
\\
Universit\'{e} Paris Sud, D\'epartement de Math\'ematiques 
\\
B\^{a}timent 425 F91405 Orsay, France
\\
E-mail: \textit{nefton.pali@math.u-psud.fr}

\end{document}